\def\openC{{\rm C\kern-.18cm\vrule width.8pt height 7pt depth-.2pt \kern.18cm}}
\def\openN{{{\rm I}\kern-.16em {\rm N}}}
\def\openR{{{\rm I}\kern-.16em {\rm R}}}
\def\openT{{{\rm T}\kern-.42em {\rm T}}}
\def\openZ{{{\rm Z}\kern-.28em{\rm Z}}}
\newtheorem{thm}{Theorem}[section]
\theoremstyle{definition}
\newtheorem{defn}[thm]{Definition}
\newtheorem{rem}[thm]{Remark}
\newtheorem{ex}[thm]{Example}
\begin{document}

\title{\textbf{Oscillation and nonoscillation criteria for impulsive delay differential equations with Perron integrable righthand sides} \vspace{-2pt}
\author{\sc
M. Ap. Silva; M. Federson \,\,\&\,\, M. C. Gadotti }}
\date{}
\maketitle \vspace{-30pt}
\bigskip
\begin{center}
\parbox{13 cm}{{\small We present new criteria for the existence of oscillatory and nonoscillatory solutions of measure delay differential equations with impulses. We deal with the integral forms of the differential equations using the Perron and the Perron-Stieltjes integrals. Thus the functions involved can have many discontinuities and be of unbounded variation and yet we obtain good results which encompass those in the literature. Examples are given to illustrate the main results.}}
\end{center}
\bigskip
\noindent{\bf Key words and phrases:} nonoscillation; oscillation; measure differential equations; \\delay differential equations; impulses; Perron integral; Perron-Stieltjes integral. \\
\noindent{\bf 2010 MSC:} 34K11; 26A39.

\thispagestyle{empty}

\section{Introduction}

Measure differential equations have been investigated by many authors, as for instance, W. Schmaedeke \cite{Schm}, P. Das and R. Sharma \cite{Das-71, Das-72} and others.  The main purpose of the  concept of measure differential equations is the description of systems exhibiting discontinuous solutions caused by the impulsive behavior of the differential system. The theory of measure differential equations allows us to encompass, for example, differential equations with impulses (\cite[Theorem 3.1]{FMS2}) as well as dynamic equations on time scales (\cite[Theorem 4.3]{FMS1}).

On the other hand, oscillations are an important property of particles in Quantum Mechanics and other areas of Physics, with applications to many applied sciences such as Engineering, Finance, etc.  In the present  paper, we deal with a class of delayed measure differential equations subject to impulse action and we present oscillation and nonoscillation criteria of solutions.

Consider the impulsive problem given by
\begin{equation} \label{eq3}
\left\{
\begin{array}{l}
Dy= - p(t)y(t - \tau)Dg \medskip\\
y(t_k^+)-y(t_k)=b_ky(t_k), \quad k\in \mathbb{N},
\end{array}
\right.
\end{equation}
where $ \tau >0$ is a constant, $\mathbb{N}={\{1,2, \ldots }\}$, $p:[t_0, \infty) \to \mathbb{R}$, $g:[t_0, \infty) \to \mathbb{R}$ is a regulated function which is left-continuous and continuous at the points of impulses $t_k$, $k \in \mathbb{N}$, $Dy$ and $Dg$ stand for the distributional derivatives of the functions $y$ and $g$ in the sense of distributions of L. Schwartz and, moreover, $ t_0 < t_1 < \ldots < t_k < \ldots $ are fixed points and $\lim_{k \to \infty}t_k= \infty$, $b_k \in (- \infty, -1) \cup (-1, \infty)$ are constants, for $k \in \mathbb{N}$, and
for each compact subset $[a,b]$ of $[t_0, \infty)$, the Perron-Stieltjes integral $\int_{a}^{b} p(s) \, dg(s)$ exists. 

Our goal here is to provide conditions for the oscillation of all solutions of \eqref{eq3} and for the existence of nonoscillatory solutions of \eqref{eq3}. 

Because the main feature of Perron integrable functions is to cope with many discontinuities and highly oscillating functions (that is, functions of unbounded variation, as e.g. $F:[0,1] \to \mathbb{R}$, $F(t)= t^2 \sin(\frac{1}{t}), 0<t\leq1$ and $F(t)=0,\, t=0$), our results encompass those from the classical theory for impulsive differential equations, see e.g. \cite{Berezansky, Berezansky2} and references therein.

Oscillation criteria for 
nonimpulsive differential equations of the type
$$\dot{x}(t)= - p(t) x(t - \tau),$$
where $p:[t_0, \infty) \to \mathbb{R}$ is a 
continuous function and $\tau$ is a positive 
constant can be found in \cite{Agarwal,GL}. 
In \cite{Yan}, the authors deal with impulsive equation 
\begin{equation}\label{eqyan}
 \dot{y}(t) + \sum_{i=1}^{n} p_i(t) y(t - \tau_i(t))=0, \quad t \neq t_k
 \end{equation} 
 and $y(t_k^+) - y(t_k) = b_k y(t_k), \quad k=1, 2, \ldots$, where $0 \leq t_0 < t_1 < \ldots < t_k < \ldots$ are fixed points with $\lim_{k \to \infty} t_k = \infty$, $p_i \in C([t_0, \infty), \mathbb{R})$ are locally summable functions and $\tau_i \in ([t_0, \infty), [0, \infty))$ are Lebesgue measurable functions and $t - \tau_i (t) \to \infty$ as $t \to \infty, i =1, 2, \ldots, n$, and $b_k \in (-\infty, -1) \cup (-1, \infty)$ are constants for $k=1, 2, \ldots$.
Assuming these hypotheses, they prove that all solutions of  \eqref{eqyan} are oscillatory if and only iff all solutions of a nonimpulsive equation related to \eqref{eqyan} are oscillatory. However, the authors do not prove any result on the oscillation and the existence of  nonoscillatory solutions.


In the present paper we deal with the integral forms of the differential equations using the Perron and Perron-Stieltjes integrals, so that highly oscillatory righthand sides having many discontinuities can be taken into account. We also present two examples in order to illustrate the main results.

\section{The generalized Perron integral}

In this section, we only mention some basic results  on the generalized Perron integral. More details can be found in \cite{Monteiro,Schwabik}.

 Let $[a,b]$ be an interval with $-\infty < a < b < \infty$. A pair $(\tau, J)$ of a point $\tau \in \mathbb{R}$ and a compact interval $J \subset \mathbb{R}$ is called a tagged interval and $\tau$ is called the tag of $J$. 
A finite collection $D={\{(\tau_j,J_j); j= 1, 2, \cdots, k}\}$ of tagged intervals is called a system in $[a,b]$, if $\tau_j \in J_j \subset [a,b]$ for every $j=1, \cdots, k$, and the intervals $J_j$ are nonoverlapping.
A system $D={\{(\tau_j, J_j); j=1, 2, \cdots , k \}}$ is called a partition of $[a,b]$, if  $\bigcup_{j=1}^{k} J_j= [a,b]$.

Given a function $\delta: [a,b] \to (0, \infty)$, called a gauge on $[a,b]$, a tagged interval $(\tau, J)$ with $\tau \in [a,b]$ is said to be $\delta$-fine, whenever $J \subset (\tau - \delta(\tau), \tau + \delta(\tau))$.
A system (in particular, a partition) $D={\{(\tau_j,J_j); j= 1, 2, \cdots, k}\}$ is said to be $\delta$-fine if each point-interval pair $(\tau_j, J_j)$ is $\delta$-fine for $j=1, 2, \cdots, k$. 

Let $|\cdot|$ denote any norm in $\mathbb{R}$.
\begin{defn}\label{d}
 
A function $f: [a,b] \to \mathbb{R}$ is called Perron integrable or Kurzweil integrable with respect to function $g : [a,b] \to \mathbb{R}$, if there is $I \in \mathbb{R}$ such that given $\epsilon > 0$, there is a gauge $\delta$ on $[a,b]$ such that

\[
\left| \displaystyle\sum_{j=1}^k f(\tau_j) \left[g(\alpha_j) - g(\alpha_{j-1}) \right]  - I \right|  < \epsilon,
\]

 for every $\delta$-fine partition $D={\{\tau_j, [\alpha_{j-1},\alpha_j]; j=1, \cdots, k}\}$ of $[a,b]$.
\end{defn}

The real number $I \in \mathbb{R}$ in Definition \ref{d} is called the Perron-Stieltjes integral of $f$ with respect to $g$ over $[a,b]$ and we denote it by $\int_a^b f(t) \, dg(t)$, which is well-known to generalize the Riemann-Stieltjes and Lebesgue-Stieltjes integrals. Clearly, in the particular case where $g(t)= t$, for $t \in \mathbb{R}$, then the number $I$ is the Perron integral.  We denote by $\mathcal{K} ([a,b],\mathbb{R})$ the space of functions of $[a,b] \times [a,b]$ in $\mathbb{R}$ that are integrable Kurzweil. 

\begin{thm}
Given $g:[a, b] \to \mathbb{R}$ a regulated function, for $f_1 \in \mathcal{K} ([a,b],\mathbb{R})$ and $f_2:[a,b] \to \mathbb{R}$ with $f_1 = f_2$ almost everywhere, we have $f_2 \in \mathcal{K} ([a,b],\mathbb{R})$ and 
$$\displaystyle\int_a^b f_2(t)\, dg(t) = \displaystyle\int_a^b f_1(t) \,dg(t).$$
\end{thm}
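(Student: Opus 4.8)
The plan is to reduce everything to the single assertion that a function vanishing almost everywhere contributes nothing to the integral, and to realize this reduction \emph{inside} the gauge definition rather than through a separately invoked linearity. Write $h = f_2 - f_1$, so that $h = 0$ off the exceptional set $E = \{t \in [a,b] : f_1(t) \neq f_2(t)\}$. Fix $\epsilon > 0$ and let $I = \int_a^b f_1 \, dg$, which exists by hypothesis. By Definition~\ref{d} there is a gauge $\delta_1$ such that every $\delta_1$-fine partition produces a Riemann--Stieltjes sum within $\epsilon/2$ of $I$. If I can produce a second gauge $\delta_2$ for which every $\delta_2$-fine partition $D = \{(\tau_j, [\alpha_{j-1},\alpha_j])\}$ satisfies $\sum_j |h(\tau_j)|\,|g(\alpha_j) - g(\alpha_{j-1})| < \epsilon/2$, then taking $\delta = \min\{\delta_1, \delta_2\}$ and splitting $\sum_j f_2(\tau_j)[g(\alpha_j)-g(\alpha_{j-1})] = \sum_j f_1(\tau_j)[g(\alpha_j)-g(\alpha_{j-1})] + \sum_j h(\tau_j)[g(\alpha_j)-g(\alpha_{j-1})]$ shows at once, via the triangle inequality, that $f_2 \in \mathcal{K}([a,b],\mathbb{R})$ and that its integral equals $I$. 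Thus the whole proof collapses to the construction of $\delta_2$.

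To build $\delta_2$ I would stratify the exceptional set by the size of $h$. Put $E_m = \{t \in [a,b] : m-1 \le |h(t)| < m\}$ for $m \in \mathbb{N}$, so that $E = \bigcup_{m\ge 1} E_m$ and each $E_m$ is negligible. For every $m$ I choose an open set $G_m \supseteq E_m$ whose total $g$-content is at most $\epsilon\,2^{-m}/m$ (made precise below), and define $\delta_2(t)$ for $t \in E_m$ so small that $(t-\delta_2(t), t+\delta_2(t)) \subset G_m$; for $t \notin E$ I set $\delta_2(t) = 1$, which is harmless since $h(\tau_j) = 0$ at such tags. Given a $\delta_2$-fine partition, each tag $\tau_j$ with $h(\tau_j) \neq 0$ lies in exactly one $E_m$, its interval is contained in $G_m$, and $|h(\tau_j)| < m$; grouping the nonoverlapping tagged intervals according to the index $m$ of their tag yields
\[
\sum_j |h(\tau_j)|\,|g(\alpha_j)-g(\alpha_{j-1})| \;\le\; \sum_{m\ge 1} m \sum_{\tau_j \in E_m} |g(\alpha_j)-g(\alpha_{j-1})|,
\]
so it remains only to bound each inner sum by $\epsilon\,2^{-m}/m$.

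The main obstacle is precisely this last estimate: controlling the accumulated $g$-increments $\sum_{\tau_j\in E_m}|g(\alpha_j)-g(\alpha_{j-1})|$ of nonoverlapping intervals confined to $G_m$. In the plain Perron case $g(t)=t$ it is trivial, since the sum is at most the total length of $G_m$, and one simply chooses $|G_m| < \epsilon\,2^{-m}/m$; the argument closes immediately. For a general regulated $g$ the correct device is the outer $g$-content $\inf \sum_i |g(\beta_i)-g(\alpha_i)|$ taken over interval coverings of $E_m$, and the operative reading of ``almost everywhere'' here is that $E$ carries zero such $g$-content — this is exactly what permits $G_m$ to be selected with arbitrarily small total $g$-increment. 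I would carry this out by covering $E_m$ with intervals whose endpoints are continuity points of $g$ (all but countably many points qualify, since a regulated function is bounded and has at most countably many discontinuities), thereby keeping each covering increment close to the oscillation of $g$ across $G_m$ and summing to the required bound. Verifying that such a covering can be made with total $g$-content below $\epsilon\,2^{-m}/m$, and that $\delta_2$-fineness keeps the partition increments inside it, is the delicate technical point; once it is established, the reductions above assemble directly into the full statement.
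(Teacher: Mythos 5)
Your reduction is the standard one and, for $g(t)=t$, it is complete; it is in fact exactly the argument the paper itself relies on, since the paper gives no proof beyond citing H\"onig's argument for $g(t)=t$ and asserting that the general case ``follows the same steps.'' The stratification $E_m=\{t: m-1\le |h(t)|<m\}$, the open covers $G_m$, and the gauge assembled from them close the argument as soon as each inner sum $\sum_{\tau_j\in E_m}|g(\alpha_j)-g(\alpha_{j-1})|$ can be forced below $\epsilon\,2^{-m}/m$.

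But the step you flag as ``the delicate technical point'' is not a technicality: for a general regulated $g$ it fails, and with it the theorem as stated. A Lebesgue-null set need not have small outer $g$-content. If $g$ jumps at a point $c$ and $c\in E_m$, then every interval whose interior contains $c$ has $g$-increment at least $|g(c^+)-g(c^-)|$, so no cover of $E_m$ with total $g$-increment below that jump exists, and choosing the endpoints of the covering intervals at continuity points of $g$ does not help. Concretely, take $g=\chi_{(c,b]}$ (regulated, even left-continuous), $f_1\equiv 0$, $f_2=\chi_{\{c\}}$: then $f_1=f_2$ Lebesgue-a.e.\ and $\int_a^b f_1\,dg=0$, but a gauge with $\delta(\tau)<|\tau-c|$ for $\tau\ne c$ forces $c$ to be the tag of any subinterval meeting the jump, so $\int_a^b f_2\,dg=g(c^+)-g(c^-)=1$. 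Your own observation that the ``operative reading of almost everywhere'' must be zero $g$-content is the correct diagnosis: the statement is true if $g$ is continuous, or if the exceptional set is null for the measure induced by $g$, but not under the hypotheses as written. So the gap cannot be closed without strengthening the hypotheses; the same criticism applies to the paper's one-line proof by reference to the case $g(t)=t$.
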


The proof of the above result follows the same steps as the proof presented in \cite{Chaim} for $g(t)=t$. 

For $f \in \mathcal{K} ([a,b],\mathbb{R})$, we define $\widetilde{f}(t)= \int_a^t f(s)ds$, $t \in [a,b]$. In $\mathcal{K} ([a,b],\mathbb{R})$ we consider the equivalence relation $\widetilde{f_1}=\widetilde{f_2}$ and we denote by $K([a,b], \mathbb{R})$ the space of the equivalence classes. We have $\widetilde{f_1}=\widetilde{f_2}$ iff $f_1=f_2$ almost everywhere, as in \cite{Chaim}. 

Given  $f \in K([a,b], \mathbb{R})$, we define the Alexiewicz norm (see \cite{norma})
$$||f||_A= \displaystyle\sup_{a \leq t \leq b} \left\| \displaystyle\int_a^t f(s) ds \right\| = ||\widetilde{f}||.$$

 Now, we present a dominated convergence theorem for the Perron integral. For a proof of it, the reader may want to look at a more general statement in \cite[Corollary 1.31]{Schwabik}.
 
\begin{thm}\label{THM}
Let ${\{f_n}\}_{n\in \mathbb N}$ be a sequence of Perron integrable functions from $[a,b]$ to $\mathbb R$ and $f:[a,b]\to\mathbb R$ be a function such that  the following conditions hold
\begin{itemize}
\item [i)] $f_n \to f$ almost everywhere (in the sense of the Lebesgue measure, in which case we write simply a.e.);
\item [ii)] there exists a nonegative Perron integrable  function $g:[a,b]\to\mathbb R$ such that $|f_n| \leq g$ a.e., for all  $n\in\mathbb N$.
Then $f$ is Perron integrable and
$$\displaystyle\lim_{n \to \infty}\int_a^b f_n(t)dt =\int_a^b f(t)dt.$$

\end{itemize}
\end{thm}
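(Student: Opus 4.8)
The plan is to deduce the result from the classical Lebesgue dominated convergence theorem, the bridge being the coincidence of the Perron and Lebesgue integrals on nonnegative integrands. Concretely, I would first establish the auxiliary fact that a nonnegative Perron integrable function $h:[a,b]\to\mathbb{R}$ is automatically Lebesgue integrable, with the same value of the integral. This is where the real work sits: the indefinite Perron integral $H(t)=\int_a^t h(s)\,ds$ is $\mathrm{ACG}_{*}$ (generalized absolutely continuous in the restricted sense), while $h\geq 0$ forces $H$ to be nondecreasing and hence of bounded variation; since a function that is simultaneously $\mathrm{ACG}_{*}$ and of bounded variation is absolutely continuous, $H$ is absolutely continuous. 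The fundamental theorem for absolutely continuous functions then yields that $H'=h$ almost everywhere is Lebesgue integrable and that $\int_a^b h\,d\mu=H(b)-H(a)=\int_a^b h(s)\,ds$, the last integral being the Perron one. I expect this step, rather than anything that follows, to be the main obstacle.

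With the auxiliary fact in hand, the rest is routine. Applying it to the dominating function shows that $g\geq 0$ is Lebesgue integrable. Each $f_n$ is measurable, because Perron integrable functions are measurable, and the bound $|f_n|\leq g$ almost everywhere together with the Lebesgue integrability of $g$ shows that $f_n$ is Lebesgue integrable and that its Lebesgue and Perron integrals coincide. The limit $f$ is measurable as an almost everywhere pointwise limit of measurable functions, and letting $n\to\infty$ in $|f_n|\leq g$ gives $|f|\leq g$ almost everywhere; hence $f$ too is Lebesgue integrable, and therefore Perron integrable, with the two integrals equal.

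It then remains only to invoke the classical Lebesgue dominated convergence theorem: from $f_n\to f$ almost everywhere and $|f_n|\leq g$ almost everywhere with $g$ Lebesgue integrable, one concludes $\lim_{n\to\infty}\int_a^b f_n\,d\mu=\int_a^b f\,d\mu$ for the Lebesgue integrals. Since each of these Lebesgue integrals equals the corresponding Perron integral by the previous paragraph, the stated conclusion $\lim_{n\to\infty}\int_a^b f_n(t)\,dt=\int_a^b f(t)\,dt$ follows at once. The only bookkeeping required is to discard the countably many null sets on which the hypotheses may fail, which is harmless because modifying a Perron integrable function on a null set changes neither its integrability nor its integral, by the invariance theorem stated earlier in this section.
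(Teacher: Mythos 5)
Your proposal is correct, but it necessarily differs from the paper, because the paper offers no proof at all of this theorem: it simply refers the reader to the more general statement in \cite[Corollary 1.31]{Schwabik}, where the result is established intrinsically within the gauge-integral framework (via Henstock's lemma and an equi-integrability argument for the sequence $\{f_n\}$, a method that also covers Stieltjes integrators and two-sided bounds $g\le f_n\le h$ by possibly non-absolutely integrable functions). Your route instead reduces everything to the classical Lebesgue dominated convergence theorem, and the reduction is sound: the key auxiliary fact --- that a nonnegative Perron integrable $h$ is Lebesgue integrable with the same integral --- is exactly where the work lies, and your argument for it (the indefinite integral $H$ is continuous and $\mathrm{ACG}_{*}$, hence satisfies Lusin's condition (N); nonnegativity of $h$ makes $H$ nondecreasing, hence of bounded variation; Banach--Zarecki then gives absolute continuity, and the fundamental theorem for AC functions finishes) is the standard and correct one. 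The remaining steps (measurability of the $f_n$ and of $f$, domination by the Lebesgue integrable $g$, coincidence of the Lebesgue and Perron integrals for absolutely integrable functions, and the harmlessness of null sets via the paper's Theorem 2.1) are all legitimate. What your approach buys is a self-contained, elementary proof modulo classical measure theory; what the cited approach buys is greater generality and independence from Lebesgue theory. Either is acceptable here, since the hypothesis $|f_n|\le g$ with $g\ge 0$ Perron integrable forces absolute integrability anyway.
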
 


\section{Preliminaries}

We deal with the following definitions. At first, recall the concept of a regulated function. Any function $f:[a,\infty) \to \mathbb{R}$ is called regulated, whenever the lateral limits 
$f(t^-)=\lim_{s\to t^-}f(s),$ $ t\in(a,\infty),$ and $ f(t^+)=\lim_{s\to t^+}f(s),$ $ t\in[a,\infty)$,
exist. We write $f\in G([a,\infty),\mathbb{R})$ in this case. If moreover $f:[a, \infty) \to \mathbb{R}$ is left-continuous, then we write $f \in G^-([a,\infty), \mathbb{R})$. If $f \in G^-([a,\infty), \mathbb{R})$ is continuous at the points of impulses $t_k$, then we write $f \in \widetilde{G}^-([a,\infty), \mathbb{R})$.

It is a know fact that any function $f:[a, \infty) \to \mathbb{R}$ of locally bounded variation is also regulated. Moreover, any function $f$ in $G([a,b], \mathbb{R})$ can be uniformly approximated by step functions. See \cite{Honig}, for instance, for a proof of these facts and other properties of regulated functions. 

\begin{defn} \label{d1}
A function $y \in G^-([t_0 - \tau,\infty), \mathbb{R})$ is a Perron-Stieltjes solution of \eqref{eq3} or simply a solution of \eqref{eq3}  on $[t_0 , \infty)$ if  for every $s_1, s_2 \in [t_0, \infty)$,
$$y(s_2) - y(s_1) = - \displaystyle\int_{s_1}^{s_2} p(t) y(t - \tau) dg(t) + \displaystyle\sum_{t_0 < t_k \leq s_2} I_k (y(t_k))$$
and, for every $k \in \mathbb{N}$, $I_k (y(t_k))=  y(t_k^+) - y(t_k^ -) = y(t_k^+) - y(t_k) = b_k y(t_k),$
that is, we suppose $y$ is left-continuous at $t=t_k$ and the lateral limits $y(t_k ^+)$ exist, for every $k \in \mathbb{N}$.
\end{defn}

\begin{rem} Definition \ref{d1} is equivalent to saying that 
$y(t)$ satisfies $$Dy(t)= - p(t) y(t - \tau) Dg(t),$$
for almost all  $t \in [t_0, \infty),$ and $y(t_k^+)= y(t_k) + I_k (y(t_k)), \, k \in \mathbb{N}.$ This fact follows from the Fundamental Theorem of Calculus for the Perron-Stieltjes integral, see \cite{FTC} for the Fundamental Theorem of Calculus and \cite{Schwabik}, page 91, for the notion of a Perron-Stieltjes solution.
\end{rem}

Since $y$ and $g$ are regulated functions, they are also Darboux \cite[Theorem 3.6]{Honig} integrable and consequently Lebesgue integrable. Therefore,  $y$ and $g$ define distributions in $\mathbb{R}$ (see \cite{hormander}). 

A function $y$  is said to be {\em eventually positive almost everywhere}, if there exists $ T \geq t_0$ such that $y(t)>0$, for almost all $t \geq T$. Similarly, a function $y$ is said to be {\em eventually negative almost everywhere}, if there exists $ T \geq t_0$ such that $y(t)<0$, for almost all $t \geq T.$ 

\begin{defn}
Given $g:[t_0, \infty) \to \mathbb{R}$ a regulated function which is left-continuous and continuous at the points of impulses $t_k$, $k \in \mathbb{N}$, we say that solution of equation \eqref{eq3} is {\em nonoscillatory}, if it is either eventually positive almost everywhere or eventually negative almost everywhere. A non trivial solution of equation \eqref{eq3} which is not nonoscillatory is called {\em oscillatory}. 
\end{defn}
Note that according to our definition of oscillation, the function $y(t)= 1 + \sin(t)$ is nonoscillatory.

\section{An oscillation criterion}

In this section, we present an oscillation criterion for a class of measure delay differential equations with impulses.

Together with  the impulsive differential equation \eqref{eq3}, we also consider  the following auxiliary measure delay differential equation
\begin{equation}  \label{eq4}
Dx=-P(t)x(t-\tau)Dg
\end{equation}
where 
\begin{equation}\label{P}
P(t)= \prod_{t- \tau \leq t_k< t}(1+b_k)p(t), \quad  t \geq t_0.
\end{equation}
We say that solution $x$ of equation \eqref{eq4} is {\em nonoscillatory}, if it is either eventually positive almost everywhere or eventually negative almost everywhere.

Note that, if $b_k < -1$, for $k \in \mathbb{N}$, then any solution $y$ of \eqref{eq3} is oscillatory. Indeed, since
$y(t_k^+)\,y(t_k)= (1 +b_k) y^2(t_k) < 0,$
it follows that any nontrivial solution $y$ of \eqref{eq3} is neither eventually positive nor eventually negative  almost everywhere.

The following result is very important because it provides us with a transformation. We can transform a measure differential equation with impulsives in a nonimpulsive differential equation, preserving the oscillation or nonoscillation of its solutions.

\begin{thm} \label{teo1}
Assume that $g \in \widetilde{G}^-([t_0, \infty), \mathbb{R})$ and the Perron-Stieltjes integral \linebreak $\int_{a}^{b} p(s)dg(s)$ exists for each subinterval $[a,b]$ of $[t_0, \infty)$. 
\begin{enumerate}
\item[(i)] If  $x$ is a  solution of \eqref{eq4} on $[\sigma, \infty)$, $\sigma \geq t_0$, then 
\begin{equation}\label{yy}
y(t)= \displaystyle\prod_{\sigma \leq t_k < t}(1+b_k)^{-1}x(t)
\end{equation}
 is a solution of \eqref{eq3} on $[\sigma, \infty)$.	
\item[(ii)] If $y$ is a solution of \eqref{eq3} on $[\sigma, \infty)$, $\sigma \geq t_0$, then 
\begin{equation}\label{xx}
x(t)= \displaystyle\prod_{\sigma \leq t_k <t}(1+b_k)y(t) 
\end{equation}
 is a solution of \eqref{eq4} on $[\sigma, \infty)$.
\end{enumerate}
 In particular, $x$ is nonoscillatory solution of \eqref{eq4} if and only if $y$ is a nonoscillatory solution of \eqref{eq3}. 
\end{thm}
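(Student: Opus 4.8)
The natural approach is a direct substitution. Introduce the step function
$$W(t) = \prod_{\sigma \le t_k < t}(1+b_k), \qquad t \ge \sigma,$$
which is regulated, left-continuous, nowhere zero, constant on each interval $(t_k,t_{k+1}]$, and satisfies $W(t_k^+) = (1+b_k)\,W(t_k)$. The formulas \eqref{yy} and \eqref{xx} simply express $y$ and $x$ as reciprocal $W$-multiples of each other, and since the two assignments are mutual inverses it is enough to establish one implication and then solve for the remaining function to obtain the other. I would prove (ii): assuming $y$ solves \eqref{eq3}, verify that the associated $x$ solves \eqref{eq4}, and read off (i) by inversion. The verification splits into a jump analysis at the points $t_k$ and a continuous-part analysis on the intervals between them.

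For the jump analysis, note that since $g\in\widetilde G^-$ is continuous at each $t_k$, the measure $Dg$ carries no atom there; by the Fundamental Theorem of Calculus for the Perron--Stieltjes integral (invoked through the Remark following Definition~\ref{d1}), the one-sided jump of any solution of the nonimpulsive equation \eqref{eq4} at $t_k$ equals $-P(t_k)\,x(t_k-\tau)\,[g(t_k^+)-g(t_k)]=0$, so $x$ must be continuous at every $t_k$. On the other hand $y$ obeys the impulsive law $y(t_k^+)=(1+b_k)\,y(t_k)$. Hence the step factor relating $x$ and $y$ has to carry precisely the jump ratio needed to cancel this impulsive ratio and leave a function continuous across every $t_k$. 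Matching these one-sided limits so that the factor reproduces the impulse law is the delicate point of the whole argument, and it is exactly where the direction of the exponent in the product factor is pinned down; I would carry this out first, for this is the step where the bookkeeping is easiest to get wrong.

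For the continuous part the decisive ingredient is the telescoping of the delay window hidden in $P$: for every $t$,
$$\prod_{t-\tau \le t_k < t}(1+b_k)\; W(t-\tau) = W(t),$$
because adjoining the window $[t-\tau,t)$ to $[\sigma,t-\tau)$ reconstitutes $[\sigma,t)$. Substituting $x(t-\tau)=W(t-\tau)\,y(t-\tau)$ and using this identity collapses $P(t)\,x(t-\tau)$ to $W(t)\,p(t)\,y(t-\tau)$, so the right-hand side of \eqref{eq4} becomes $-\int_{s_1}^{s_2} W(t)\,p(t)\,y(t-\tau)\,dg(t)$. Since $t_k\to\infty$, only finitely many $t_k$ lie in a compact $[s_1,s_2]$; I would partition $[s_1,s_2]$ at these points, and on each subinterval $W$ is a nonzero constant that pulls out of the integral while $x$ is the corresponding constant multiple of $y$. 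The integral form of \eqref{eq3}, which has no impulse in the interior of a subinterval, then yields the required identity on each piece, and interval additivity of the Perron--Stieltjes integral together with the jump relations from the first step reconciles the values at the partition points, giving the global equality. Existence of every integral involved follows from the hypothesis that $\int_a^b p\,dg$ exists on compacts and from $W$ being a bounded step function.

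Finally, the equivalence of (non)oscillation follows because $y$ and $x$ differ pointwise almost everywhere only by the nowhere-zero factor $W(t)$. When $1+b_k>0$ for all $k$ this factor is strictly positive, so $y(t)$ and $x(t)$ share the same sign for a.e.\ $t$; thus $y$ is eventually positive (resp.\ negative) almost everywhere if and only if $x$ is, and $y$ changes sign on arbitrarily large sets precisely when $x$ does, which is the asserted equivalence. (If infinitely many $b_k<-1$, the factor $W$ reverses sign at those $t_k$, in agreement with the Remark preceding the theorem that such solutions are automatically oscillatory.) To summarize, I expect the Perron--Stieltjes manipulations to be routine given the step-function structure and the assumed existence of the integrals; the genuine obstacle is the jump bookkeeping, namely arranging the product factor so that its jumps exactly reproduce the impulsive ratio while the telescoping of the delay window converts $P$ back into $p$.
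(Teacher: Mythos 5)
Your overall architecture --- the step function $W(t)=\prod_{\sigma\le t_k<t}(1+b_k)$, the telescoping identity $\prod_{t-\tau\le t_k<t}(1+b_k)\,W(t-\tau)=W(t)$ for the delay window, pulling the locally constant $W$ out of the Perron--Stieltjes integral between impulse points, and the sign argument for the (non)oscillation equivalence --- is essentially the paper's, and those parts are sound. The genuine gap is that you never execute the one step you yourself single out as ``the delicate point'': the jump bookkeeping at the $t_k$. If you carry it out with the formulas as written, it does not close. You correctly observe that any solution $x$ of \eqref{eq4} must be continuous at each $t_k$ (no impulse term, and $g$ continuous there). But with $x=Wy$ as in \eqref{xx}, the impulse law $y(t_k^+)=(1+b_k)y(t_k)$ combined with the step relation $W(t_k^+)=(1+b_k)W(t_k)$ gives $x(t_k^+)=(1+b_k)^2x(t_k)$, not $x(t_k^+)=x(t_k)$; equivalently, in direction (i), $y=W^{-1}x$ with $x$ continuous yields $y(t_k^+)=(1+b_k)^{-1}y(t_k)$, which is the required impulse law only when $b_k\in\{0,-2\}$. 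So the two demands you place on the ``step factor'' --- that its jump cancel the impulsive ratio so as to leave $x$ continuous, and that its jump ratio be $(1+b_k)$ --- are incompatible, and your plan to ``reconcile the values at the partition points'' in the continuous-part analysis stalls at every $t_k$.

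The repair is to invert the product either in \eqref{yy}--\eqref{xx} or in the definition \eqref{P} of $P$ (the latter is the convention of Yan and Zhao, where the nonimpulsive coefficient carries $\prod(1+b_k)^{-1}$); with either correction your telescoping computation goes through verbatim. Be aware that the paper's own proof has the same soft spot: its displayed limits give $y(t_k^+)=\prod_{T\le t_j\le t_k}(1+b_j)^{-1}x(t_k)$ and $y(t_k)=\prod_{T\le t_j<t_k}(1+b_j)^{-1}x(t_k)$, from which it asserts $y(t_k^+)=(1+b_k)y(t_k)$, although those displays actually give $y(t_k^+)=(1+b_k)^{-1}y(t_k)$. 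Your instinct about where the argument is fragile is therefore exactly right, but a proof must resolve that discrepancy rather than defer it; as written, the crux of the theorem is missing from your attempt.
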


\begin{proof}
Consider $T> 0$. Let $x$ be a  solution of \eqref{eq4} and $y$ be defined by \eqref{yy}. Then for each $t \geq T+\tau$,\begin{eqnarray*}
Dy + p(t)y(t-\tau)Dg & = & \prod_{T \leq t_k <t}(1 +b_k)^{-1}Dx + p(t) \prod_{T \leq t_k <t-\tau}(1+b_k)^{-1}x(t-\tau)Dg\\
& = & \prod_{T \leq t_k < t}(1+b_k)^{-1} \left(Dx +  \prod_{t- \tau \leq t_k <t}(1+b_k)p(t)x(t-\tau)Dg \right)\\
& = & \prod_{T \leq t_k < t}(1+b_k)^{-1} \left(Dx +  P(t)x(t-\tau)Dg \right)= 0.
\end{eqnarray*}
Moreover, for every $k \in \mathbb{N}$, we have
\[
\begin{array}{rcl}
y(t_k^+)&=&\displaystyle\lim_{t \to t_k^+}\prod_{T \leq t_j <t}(1+b_j)^{-1}x(t)=\prod _{T \leq t_j \leq t_k}(1+b_j)^{-1}x(t_k)\\
y(t_k)&= & \prod_{T\leq t_j< t_k}(1+b_k)^{-1}x(t_k).
\end{array}
\]
Thus, for every $t_k \geq T+\tau$, $ k \in \mathbb{N}$, we have $y(t_k^+)=(1+b_k)y(t_k).$ Therefore $y$ is solution of \eqref{eq3}.

Now we prove that the transformation \eqref{yy} keeps the oscillatory behavior of solutions. Without loss of generality, suppose that $x$ is an eventually positive almost everywhere solution of \eqref{eq4}, i.e. $x(t)>0 $ for almost all $t \geq T \geq t_0$ and, therefore, $x(t)>0$ for almost all $t \geq T+ \tau$. Hence $x(t - \tau)>0$ for almost all $t \geq T+ \tau$ and from  \eqref{yy} we have
$y(t_k)=\prod_{T\leq t_j< t_k}(1+b_k)^{-1}x(t_k)>0$ and $y(t_k^+)=(1+b_k)y(t_k)>0$. Thus $y$ is eventually positive almost everywhere solution of \eqref{eq3}.

Conversely, if  $y$ is a solution of \eqref{eq3}, then we show that $x$ defined by \eqref{xx} satisfies equation  \eqref{eq4}. 
Indeed, using \eqref{P}  for each $t \geq T + \tau$ 
\begin{eqnarray*}
Dx + P(t) x(t-\tau)Dg & = & \prod_{T \leq t_k <t} (1+b_k)Dy  + P(t)  \prod_{T \leq t_k< t-\tau}(1+b_k)y(t-\tau)Dg\\
& = & \prod_{T \leq t_k<t}(1+b_k) \left[ Dy+p(t)y(t-\tau)Dg \right]=0.
\end{eqnarray*}
Let $y$ be a nonoscillatory solution of \eqref{eq3}. Without loss of generality, we suppose that $y(t)$ is eventually positive almost everywhere. Then there exists $T \geq t_0$ such that $y(t)>0$ for almost all $t \geq T$. In particular, $y(t)>0$ for almost all $t \geq T + \tau$, i.e. $y(t - \tau) >0$ for almost all $t \geq T+ \tau$. Since $y$ is a nonoscillatory solution of \eqref{eq3}, we get $b_k>-1$ for every large $k \in \mathbb{N}$. 
Then  the function $x$ defined by \eqref{xx} for $t \geq T+\tau$ is a solution of \eqref{eq4} satisfying $x(t)>0$ for almost all $t \geq T+\tau$, that is, $x$ is an eventually positive almost everywhere solution. 
\end{proof}

			


The next result gives sufficient conditions for all solutions of \eqref{eq3} to be oscillatory.

\begin{thm} \label{Teorema*}
Assume  that $g \in \widetilde{G}^-([0, \infty), \mathbb{R})$ and $g$ is nondecreasing function. Suppose that $p \in K([a,b], \mathbb{R})$ and the Perron-Stieltjes integral $\int_{a}^b p(s) \, dg(s)$ exists for each subinterval $[a,b]$ of $[0, \infty)$. If
\begin{equation}\label{C1}
\displaystyle\limsup_{t \to \infty} \displaystyle\int_{t - \tau}^t \displaystyle\prod_{s - \tau \leq t_k < s} (1+b_k) p(s) \, dg(s) >1,
\end{equation}
 where $p(t)\, Dg(t) >0$  for almost every $t \in [0, \infty)$, then all solutions of \eqref{eq3} are oscillatory.
\end{thm}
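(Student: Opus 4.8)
The plan is to reduce \eqref{eq3} to the auxiliary nonimpulsive equation \eqref{eq4} and then run a monotonicity-plus-contradiction argument, with the bulk of the care going into the measure-theoretic bookkeeping rather than the algebra.

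First I would observe that the integrand in \eqref{C1} is precisely $P(s)$ from \eqref{P}, so the hypothesis reads $\limsup_{t\to\infty}\int_{t-\tau}^{t}P(s)\,dg(s)>1$. By Theorem \ref{teo1}, the transformations \eqref{yy} and \eqref{xx} carry solutions of \eqref{eq3} to solutions of \eqref{eq4} and back, preserving (non)oscillation; hence all solutions of \eqref{eq3} are oscillatory if and only if all solutions of \eqref{eq4} are. It therefore suffices to derive a contradiction from the assumption that \eqref{eq4} has a nonoscillatory solution. Since \eqref{eq4} is linear and homogeneous, I may assume without loss of generality that this solution $x$ is eventually positive a.e., say $x(t)>0$ for a.e.\ $t\ge T$ with $T\ge t_0$, and consequently $x(t-\tau)>0$ for a.e.\ $t\ge T+\tau$. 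As in the proof of Theorem \ref{teo1}, the eventually-positive branch forces $1+b_k>0$ for all large $k$, so that, combined with the sign hypothesis $p(t)\,Dg(t)>0$ and $g$ nondecreasing, the product $P(s)\,dg(s)$ defines a nonnegative measure for $s$ large.

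Next I would establish monotonicity of $x$. Using the integral form of \eqref{eq4} (the analogue of Definition \ref{d1} with no impulsive term), for $T+\tau\le s_1\le s_2$ one has
$$x(s_2)-x(s_1)=-\int_{s_1}^{s_2}P(t)\,x(t-\tau)\,dg(t)\le 0,$$
provided the integrand is nonnegative. Once $x$ is known to be nonincreasing and positive, I would integrate \eqref{eq4} over $[t-\tau,t]$ and exploit the monotonicity: since $s-\tau\le t-\tau$ for $s\in[t-\tau,t]$ we have $x(s-\tau)\ge x(t-\tau)$, and with $P(s)\,dg(s)\ge 0$,
$$x(t-\tau)-x(t)=\int_{t-\tau}^{t}P(s)\,x(s-\tau)\,dg(s)\ge x(t-\tau)\int_{t-\tau}^{t}P(s)\,dg(s).$$
Dividing by $x(t-\tau)>0$ and using $x(t)>0$ yields $\int_{t-\tau}^{t}P(s)\,dg(s)\le 1-\frac{x(t)}{x(t-\tau)}<1$ for all large $t$, whence $\limsup_{t\to\infty}\int_{t-\tau}^{t}P(s)\,dg(s)\le 1$, contradicting \eqref{C1}.

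The step I expect to be the main obstacle is the one I glossed over above: justifying that the integrand $P(t)\,x(t-\tau)$ is nonnegative against $dg$, and that the resulting monotonicity and positivity hold \emph{pointwise} rather than merely a.e.\ in the Lebesgue sense. The difficulty is that $dg$ may carry atoms and a singular part, so Lebesgue-a.e.\ positivity of $x$ does not automatically control the values of $x$ at the points where $dg$ concentrates mass. The key that unlocks this is the left-continuity built into $G^{-}$: if $x>0$ a.e.\ then at every point $t^{*}$ we may pick $s_n\uparrow t^{*}$ with $x(s_n)>0$, and left-continuity gives $x(t^{*})=\lim_n x(s_n)\ge 0$, so $x\ge 0$ \emph{everywhere}. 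This upgrades the a.e.\ information to the everywhere-nonnegativity needed at atoms of $g$, makes the integral inequality legitimate, and hence yields that $x$ is nonincreasing; a nonincreasing function that is positive a.e.\ cannot vanish on a tail, so in fact $x(t)>0$ for all large $t$, validating the pointwise divisions above. The boundedness of $x$ by its value at $T+\tau$ also supplies any domination needed to justify limiting manipulations via Theorem \ref{THM}. Modulo these measure-theoretic verifications, the contradiction with \eqref{C1} completes the proof.
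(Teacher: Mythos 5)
Your proposal is correct and follows essentially the same route as the paper's own proof: reduce to the nonimpulsive equation \eqref{eq4} via Theorem \ref{teo1}, assume an eventually positive a.e.\ solution $x$, integrate \eqref{eq4} over $[t-\tau,t]$, use the sign condition on $P\,dg$ and the resulting monotonicity of $x$ to bound $\int_{t-\tau}^{t}P(s)\,dg(s)$ by $1-\frac{x(t)}{x(t-\tau)}<1$, and contradict \eqref{C1}. Your extra care in upgrading the Lebesgue-a.e.\ positivity of $x$ to everywhere-nonnegativity via left-continuity (so the Stieltjes integrand is controlled at atoms of $g$) addresses a point the paper passes over silently, and is a worthwhile refinement rather than a deviation.
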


\begin{proof}
	Suppose that $y$ is a nonoscillatory solution of \eqref{eq3}. Without loss of generality, we suppose that it is eventually positive  almost everywhere. Then there exists $T \geq 0$ such that $y(t)>0$ for almost all $t \geq T$ and $b_k>-1$ for every large $k$. Thus, the function $P$ defined by \eqref{P}
	has the same sign as $p$ for $t\geq T$. 
	
From Theorem \ref{teo1}, equation \eqref{eq4} also has a positive  almost everywhere solution 
$x$ on $[T, \infty)$. Integrating equation \eqref{eq4} from $t - \tau$ to $t$, we obtain
\begin{equation}\label{int}
x(t) - x(t - \tau) = - \displaystyle\int_{t - \tau}^t P(s) x(s - \tau) dg(s).
\end{equation}
Note that, since $x(t) >0$ for almost all $t \geq T$, $x(t-\tau)> 0$ for almost all $t \geq T+\tau$.
Thus, for almost all $ t \geq T + \tau$, we have
$$x(t) - x(t - \tau) = - \displaystyle\int_{t - \tau}^t P(s) x(s - \tau) dg(s) \leq 0,$$
because $P$ and $Dg$ have the same sign.
Therefore, $x(t) \leq x(t - \tau)$ for almost all  $ t \geq T + \tau$.
 From here and \eqref{int} 
we get
\[
x(t-\tau)\displaystyle\int_{t - \tau}^t P(s) \, dg(s)\leq \displaystyle\int_{t - \tau}^t P(s) x(s - \tau)\, dg(s)
\]
for almost all $t \geq T+ \tau$.
Thus, 
\[
0 \geq x(t) + x(t- \tau) \left[ \displaystyle\int_{ t - \tau}^t P(s) \, dg(s) - 1 \right], \quad  t \geq T+\tau,
\]
which is a contradiction with \eqref{C1}.
Therefore $y$ is an oscillatory solution of \eqref{eq3}.
\end{proof}

\begin{rem}
Note that when $p$ is of bounded variation, the Perron-Stieltjes integral $\int_{a}^b p(s) \, dg(s)$ exists. (See \cite[Theorem 4]{substi}).

\end{rem}

\section{A nonoscillation criterion}

In this section, we study the existence of nonoscillatory solutions
for measure delay differential equations with impulses of type \eqref{eq3} with $g(s)=s$. We assume that $p$ is positive almost everywhere in $[t_0, \infty)$. Therefore, equation \eqref{eq3} can be rewritten as
\begin{equation}\label{equacao2}
\left\{
\begin{array}{l}
\dot{y}(t)=-p(t)y(t - \tau) \medskip\\
y(t_k^+)-y(t_k)=b_ky(t_k), \quad k\in \mathbb{N},
\end{array}
\right.
\end{equation}
satisfying the same hypothesis of equation \eqref{eq3}, that is, 
$ t_0 < t_1 < \ldots < t_k < \ldots $ are fixed points and $\displaystyle\lim_{k \to \infty}t_k= \infty$, for $k \in \mathbb{N}$, $b_k \in (- \infty, -1) \cup (-1, \infty)$ are constants and $\tau >0$ is a constant, and for each subinterval $[a,b]$ of $[t_0, \infty)$, the Perron integral $\int_{a}^{b} p(s) \, ds$ exists.

Let $u$ be a Perron integrable function over $[t_0,\infty)$ 
and consider the integral equation
\begin{equation} \label{ua}
u(t)= P(t)\exp\left(\displaystyle\int_{t-\tau}^{t}u(s)ds \right).
\end{equation}
We say that solution of equation \eqref{ua} is {\em nonoscillatory}, if it is either eventually positive almost everywhere or eventually negative almost everywhere.

\begin{thm} \label{lem}
Assume that the Perron integral $\int_{a}^{b} p(s) \, ds$ exists for each subinterval $[a,b]$ of $[t_0, \infty)$ and $b_k>-1$, for every $k \in \mathbb{N}$. If $ p $ is positive almost everywhere, then the following statements are equivalent:
\begin{itemize}
\item[\rm{(i)}] Equation \eqref{equacao2} has a nonoscillatory solution. 
\item[\rm{(ii)}] Equation \eqref{ua} has a nonoscillatory solution.
\item[\rm{(iii)}] The sequence $(u_k)$ of Perron integrable functions 
\[
\begin{array}{rcl}
u_1(t)&=&P(t)=\displaystyle\prod_{t-\tau \leq t_k< t}(1+b_k)p(t)\\
u_{k+1}(t)&=&P(t) \exp \left(\displaystyle\int_{t-\tau}^tu_k(s)ds \right),
\end{array}
\]
defined for almost every $t \geq T \geq t_0$ and $k \in \mathbb{N}$, converges pointwisely  almost everywhere in  $[T, \infty)$. 
\end{itemize}
\end{thm}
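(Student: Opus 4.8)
The plan is to prove the three statements equivalent by establishing the cycle (i) $\Rightarrow$ (ii) $\Rightarrow$ (iii) $\Rightarrow$ (i). The backbone is the observation that, since here $g(s)=s$, the auxiliary equation \eqref{eq4} reduces to the nonimpulsive delay equation $\dot{x}(t)=-P(t)x(t-\tau)$, so by Theorem \ref{teo1} the nonoscillation of \eqref{equacao2} is equivalent to the existence of an eventually positive (a.e.) solution $x$ of this reduced equation; moreover the hypotheses $p>0$ a.e. and $b_k>-1$ force $P(t)=\prod_{t-\tau\le t_k<t}(1+b_k)p(t)>0$ a.e., which is exactly what drives the monotonicity in (iii). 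For (i) $\Rightarrow$ (ii) I would start from a nonoscillatory solution of \eqref{equacao2}; after possibly replacing it by its negative, Theorem \ref{teo1} yields an eventually positive, continuous solution $x$ of the reduced equation on some $[T,\infty)$, which may be taken strictly positive. The key device is the Riccati-type substitution $u(t)=P(t)x(t-\tau)/x(t)$: integrating $\dot{x}/x$ over $[t-\tau,t]$ and invoking the fundamental theorem of calculus for the Perron integral gives $x(t-\tau)/x(t)=\exp\bigl(\int_{t-\tau}^{t}u(s)\,ds\bigr)$, so that $u$ solves \eqref{ua}, and since $P>0$ and $x>0$, $u$ is eventually positive a.e., hence nonoscillatory.

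For (ii) $\Rightarrow$ (iii), let $u$ be an eventually positive (a.e.) solution of \eqref{ua} on $[T,\infty)$. I would show by induction that the iterates satisfy $u_1\le u_2\le\cdots\le u$ a.e. Monotonicity follows because $P>0$: since $u_1=P\ge 0$, the integral $\int_{t-\tau}^{t}u_1\,ds\ge 0$ makes the exponential at least $1$, giving $u_2\ge u_1$, and $u_k\ge u_{k-1}$ propagates through the monotone map $v\mapsto P\exp(\int_{t-\tau}^{t}v)$. The same monotone map gives the upper bound $u_k\le u$ for all $k$, starting from $u_1=P\le P\exp(\int_{t-\tau}^{t}u)=u$. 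Being monotone increasing and bounded above by $u$, the sequence $(u_k)$ converges pointwise a.e. on $[T,\infty)$, which is precisely (iii).

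For (iii) $\Rightarrow$ (i), let $\bar{u}=\lim_k u_k$, finite a.e. Because $u_{k+1}(t)/P(t)=\exp(\int_{t-\tau}^{t}u_k)$ converges to the finite value $\bar{u}(t)/P(t)$, the partial integrals $\int_{t-\tau}^{t}u_k$ stay bounded; a monotone convergence argument for the Perron integral (equivalently, Theorem \ref{THM} with the limit as dominating function once its integrability is secured) then shows $\bar{u}$ is Perron integrable on compact intervals and passes the recursion to the limit, so $\bar{u}$ solves \eqref{ua} with $\bar{u}\ge u_1=P>0$. Reconstructing $x(t)=\exp\bigl(-\int_{T}^{t}\bar{u}(s)\,ds\bigr)$ yields a positive, continuous function with $\dot{x}=-\bar{u}x=-P\,x(\cdot-\tau)$ a.e. (using $\bar{u}(t)x(t)=P(t)x(t-\tau)$ from \eqref{ua}), i.e. a positive solution of the reduced equation; Theorem \ref{teo1} then produces a nonoscillatory solution $y$ of \eqref{equacao2}, which is (i).

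The main obstacle I anticipate is the careful justification of the analytic manipulations inside the Perron/Kurzweil framework: the logarithmic differentiation and fundamental theorem of calculus used to pass between the reduced equation and \eqref{ua} in both directions, and above all the integrability of the limit $\bar{u}$ in (iii) $\Rightarrow$ (i). Since the excerpt supplies a dominated rather than a monotone convergence theorem, the delicate point is to extract integrability of $\bar{u}$ from the a.e. finiteness of the limit, exploiting the boundedness of the partial integrals $\int_{t-\tau}^{t}u_k$ forced by the recursion, so that Theorem \ref{THM} may be applied with $\bar{u}$ itself as the dominating function.
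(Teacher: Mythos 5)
Your proposal is correct and follows essentially the same route as the paper: the Riccati-type substitution $u=-\dot{x}/x=P(t)x(t-\tau)/x(t)$ for (i)$\Rightarrow$(ii), the explicit negative exponential $x(t)=-\exp\left(-\int_T^t u\right)$ (you use the positive one) for the reverse direction, the monotone iteration $0<u_k\leq u_{k+1}\leq u$ for (ii)$\Rightarrow$(iii), and a convergence theorem to pass the recursion to the limit for the converse. The only differences are organizational --- you close the cycle (iii)$\Rightarrow$(i) directly where the paper proves (iii)$\Rightarrow$(ii) and (ii)$\Rightarrow$(i) separately --- and your explicit concern about securing the Perron integrability of the limit function before invoking Theorem \ref{THM} is, if anything, more careful than the paper's own argument, which uses that limit as its dominating function without first establishing its integrability.
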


\begin{proof}
Let us prove the implication $(i) \Rightarrow (ii)$. Let $y$ be a nonoscillatory solution of \eqref{equacao2}. By Theorem \ref{teo1}, $x$ defined by \eqref{xx} is an nonoscillatory solution of \eqref{eq4}. Without loss of generality, we suppose that $x(t)>0$ for almost all $t\geq T \geq t_0$. 
Set 
\[u(t)= -\frac{\dot{x}(t)}{x(t)}
\] for almost all $t \geq T.$ 
Then $u$ is defined almost everywhere in $[T, \infty)$ and it is not difficult to prove that 
$u$ satisfies \eqref{ua} almost everywhere in $[T, \infty) $. Thus $u$ is eventually  positive almost everywhere solution of	\eqref{ua}  in $[T, \infty)$.
		
		
Now, we prove the implication $(ii) \Rightarrow (i)$. Let $u$ be a nonnegative solution of \eqref{ua}  almost everywhere in $[T, \infty)$. Set
\[
x(t)= - \exp \left(-\int_T^t u(s) ds \right),
\] 
for $t \geq T.$
Then $x$ is a solution of \eqref{eq4}  which is negative almost everywhere in $[T, \infty)$ and, hence, $y(t)=\prod_{T \leq t_k <t}(1+b_k)^{-1}x(t)$ is a solution of \eqref{equacao2} which is negative almost everywhere in $[T, \infty)$. 
			
Let us prove $(ii) \Rightarrow (iii)$. Let $u(t)$ be a solution of \eqref{ua} which is nonnegative almost everywhere in $[T, \infty)$. Then, for almost all $ t \geq T$,
\begin{align*}
u_1(t)&= P(t) \leq u(t)\\
u_1(t)&\leq P(t) \exp \left(\displaystyle\int_{t- \tau}^tu_1(s)ds \right)=u_2(t)\leq P(t) \exp \left(\displaystyle\int_{t- \tau}^tu(s)ds \right)= u(t).
\end{align*}
By induction, one can prove that, for almost all $t \geq T$ and all $k \in \mathbb{N}$, 
\begin{equation}\label{bound}
0< u_k(t) \leq u_{k+1}(t) \leq u(t).
\end{equation}
Thus the sequence ${\{u_k(t)\}}$ converges pointwisely almost everywhere to, say, a function $\widetilde{u}(t)$, that is,
$\lim_{k \to \infty}u_k(t)=\widetilde{u}(t) \leq u(t), $
which means that $(iii)$ holds. 
				
Finally, we prove the implication $(iii) \Rightarrow (ii)$. 
Consider the sequence $(u_k)$ and let $\displaystyle\lim_{k \to \infty} u_k(t)= u(t)$ for almost all $t \geq T$. 
Then  \eqref{bound} holds
and the functions $u_k$, $k \in \mathbb{N}$, are uniformly bounded by a positive function $u$ on $[t- \tau, t]$ for almost all $t \geq T$. Therefore, by Dominated Convergence Theorem for the Perron integral, see Theorem \ref{THM} 
we obtain that $u$ satisfies \eqref{ua} and is eventually positive almost everywhere. The proof is complete.
\end{proof}
		
The next result concerns a nonoscillation criteria for \eqref{equacao2}.

\begin{thm}\label{teo2}
	Assume that  the Perron integral $\int_{a}^{b} p(s) \, ds$ exists for every subinterval $[a,b]$ of $[t_0, \infty)$ and $b_k>-1$, for $k \in \mathbb{N}$. Suppose $ p $ is positive almost everywhere and there is $T \geq t_0$ such that  
	\begin{equation}\label{x}
	\displaystyle\int_{t-\tau}^t\displaystyle\prod_{s-\tau \leq t_k<s}(1+b_k)p(s)ds \leq  \displaystyle\frac{1}{e}, \,  \, \quad t \geq T.
	\end{equation}
	Then equation \eqref{equacao2} admits a nonoscillatory solution. 
\end{thm}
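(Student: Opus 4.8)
The plan is to reduce everything to the equivalence established in Theorem \ref{lem}, and more precisely to verify condition (iii) of that theorem directly from the hypothesis \eqref{x}. I would observe first that, by the definition \eqref{P} of $P$, the integral in \eqref{x} is exactly $\int_{t-\tau}^t P(s)\,ds$, so the standing assumption reads $\int_{t-\tau}^t P(s)\,ds \le 1/e$ for all $t \ge T$. Since $p$ is positive almost everywhere and $b_k>-1$ for every $k$, each factor $1+b_k$ is positive and only finitely many $t_k$ fall in any interval $[s-\tau,s)$ (because $\tau$ is finite and $t_k\to\infty$); hence $P$ is nonnegative and finite almost everywhere, and the sequence $(u_k)$ of Theorem \ref{lem}(iii) is well defined with $u_k \ge 0$ a.e.

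Next I would establish two facts about $(u_k)$ by induction on $k$. The first is monotonicity: $u_k(t)\le u_{k+1}(t)$ for almost all $t\ge T$. This is immediate for $k=1$, since $u_2=P\exp\bigl(\int_{t-\tau}^t u_1\bigr)\ge P=u_1$ (the exponent is nonnegative), and the inductive step follows from monotonicity of the integral and of the exponential. The second, and decisive, fact is the uniform bound $u_k(t)\le e\,P(t)$ for almost all $t\ge T$ and every $k$. The base case holds because $P\le eP$. For the inductive step, assuming $u_k\le eP$, I estimate $u_{k+1}(t)=P(t)\exp\bigl(\int_{t-\tau}^t u_k(s)\,ds\bigr)\le P(t)\exp\bigl(e\int_{t-\tau}^t P(s)\,ds\bigr)\le P(t)\exp\bigl(e\cdot\tfrac1e\bigr)=e\,P(t)$, where the last inequality uses precisely \eqref{x}.

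I expect the main point---indeed the reason the constant $1/e$ appears at all---to be the exact closure of this induction. A bound of the form $C\cdot P$ is preserved by the iteration exactly when $\exp(C/e)\le C$, and $C=e$ is the smallest constant satisfying this, with equality since $e^{(1/e)\cdot e}=e$. This matching of the critical threshold $1/e$ with the fixed point $C=e$ of $C=e\ln C$ is what makes the argument work, so guessing the correct dominating function $e\,P$ is really the only nontrivial step.

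Finally, since for almost every fixed $t\ge T$ the sequence $(u_k(t))$ is nondecreasing and bounded above by the finite number $e\,P(t)$, it converges pointwise almost everywhere on $[T,\infty)$. This is exactly statement (iii) of Theorem \ref{lem}, so by the implication $\mathrm{(iii)}\Rightarrow\mathrm{(i)}$ of that theorem, equation \eqref{equacao2} admits a nonoscillatory solution, which completes the proof. The passage from the convergent iteration to an actual (eventually positive) solution of \eqref{ua}, and hence of \eqref{equacao2}, is handled inside Theorem \ref{lem} through the Dominated Convergence Theorem \ref{THM}, with $e\,P$ serving as the Perron integrable dominating function.
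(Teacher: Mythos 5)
Your proposal is correct and follows essentially the same route as the paper's own proof: reduce to condition (iii) of Theorem \ref{lem}, then show by induction that the iterates satisfy $0\le u_k(t)\le u_{k+1}(t)\le e\,P(t)$, the closure of the induction resting on $\exp\bigl(e\cdot\tfrac1e\bigr)=e$ together with hypothesis \eqref{x}. Your added remark explaining why $1/e$ is exactly the critical threshold is a nice touch but does not change the argument.
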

\begin{proof}
 Consider the sequence  $(u_k)$ of Perron integrable functions defined in Theorem \ref{lem}-(iii). 
By \eqref{x}, we obtain
\begin{equation*}
\displaystyle\int_{t - \tau}^tu_1(s)ds=\displaystyle\int_{t-\tau}^tP(s)ds \leq \displaystyle\frac{1}{e}, \qquad t \geq T.
\end{equation*}
Moreover, for almost all $t \geq T$, we have 
\begin{eqnarray*}
u_2(t)&=& P(t)\exp \left(\displaystyle\int_{t - \tau}^tu_1(s)ds \right) \leq P(t)e^{\frac{1}{e}} \leq P(t)\,e.\\
u_3(t)&=&P(t)\exp \left(\displaystyle\int_{t - \tau}^tu_2(s)ds \right) \leq P(t) \exp \left(e \displaystyle\int_{t - \tau}^tP(s)ds \right) \leq P(t) \, e.
\end{eqnarray*}
By induction, for almost all $t \geq T$ and all $k \in \mathbb{N}$, we obtain
\[
u_k(t) \leq u_{k+1}(t) = P(t) \exp \left(\displaystyle\int_{t - \tau}^tu_k(s) ds \right) \leq  P(t)\exp \left(\displaystyle\int_{t -\tau}^t P(s) \, e \, ds \right)\leq P(t)\, e.
\]
Thus, $0\leq u_k(t) \leq u_{k+1}(t) \leq  P(t)\, e,$ for almost all $ t \geq T$ and all $k \in \mathbb{N},$
which implies that the sequence $(u_k)$ is convergent almost everywhere in $[T, \infty)$ to the nonoscillatory solution of \eqref{ua}. By Theorem  \ref{lem}, equation
\eqref{equacao2} has a nonoscillatory solution.
\end{proof}

\section{Examples}

In order to illustrate the main results, we present some examples.
\begin{ex}
		Consider the impulsive delay differential equation 
		\begin{equation} \label{eqex}
		\left\{
		\begin{array}{l}
		Dy(t)= - t^4 \chi_{[4, \infty)\setminus \mathbb{Q}}(t) y(t - 2)Dg(t), \quad t \in [4, \infty) \setminus {\{t_1, t_2, \ldots, t_k, \ldots}\} \medskip\\
		y(t_k^+)-y(t_k)=\displaystyle\frac{1}{2}y(t_k), \quad k\in \mathbb{N},
		\end{array}
		\right.
		\end{equation}
		where $\chi_A$ denotes the characteristic function of a set $A \subset \mathbb{R}$, $p(t)= t^4 \chi_{[4, \infty)\setminus \mathbb{Q}}(t)$, $g(t)=2t^3$ and $b_k= \displaystyle\frac{1}{2}$, for all $k \in \mathbb{N}$.
		Suppose that there is at most one point of impulse effect in each interval $[t - \tau, t)$ where $\tau >0$ is given. Thus,
		\begin{equation*}
		\displaystyle\int_{t -2}^t \displaystyle\prod_{s - 2 \leq t_k <s} (1+b_k) p(s) \, dg(s)= (1+b_k) \displaystyle\int_{t - 2}^t p(s) \, \, dg(s), \qquad \mbox{if} \, \, t_k \in [t - 2, t) 
		\end{equation*}
		or
		\begin{equation*}
		\displaystyle\int_{t -2}^t \displaystyle\prod_{s - 2 \leq t_k <s} (1+b_k) p(s) \, dg(s) = \displaystyle\int_{t - 2}^t p(s) \, dg(s), \qquad  \mbox{if} \, \, t_k \notin [t - 2, t).
		\end{equation*}
	
The nonimpulsive differential equation related to \eqref{eqex} is
\begin{equation}\label{eqex1}
Dx(t)= - P(t) x(t-2) Dg(t),
\end{equation}
where $P(t)=\prod_{t-2 \leq t_k < t} (1+b_k)p(t) =\frac{3}{2} t^4 \chi_{[4, \infty)\setminus \mathbb{Q}}(t)$.		
By Theorem \ref{teo1}, equation \eqref{eqex} is oscillatory if and only if equation \eqref{eqex1} is oscillatory. Note that $p(t)= t^4 \chi_{[4, \infty)\setminus \mathbb{Q}}(t)$ and  $ Dg(t)=\dot{g}(t)= 6t^2$, since $g$ is a continuous function. Thus $p$ and $\dot{g}$ have the same signal. Moreover, $g(t)= 2 t^3$ is clearly regulated, since it is continuous.
				
It is evident that the Lebesgue integral $\int_a^b p(s) dg(s) = \int_a^b p(s) \dot{g}(s)$ exists for every $[a,b] \subset [4, \infty)$. Hence, the Perron integral $\int_a^b p(s)dg(s)$ exists on each compact  subinterval of $[4, \infty)$. Furthermore,
\[
\displaystyle\limsup_{t \to \infty}\int_{t-2}^t \displaystyle\prod_{s-2 \leq t_k <s} (1 + b_k)  p(s) \, dg(s) =\displaystyle\limsup_{t \to \infty}\int_{t-2}^t 9 s^6 ds >1.
\]
Therefore, by Theorem \ref{Teorema*}, all solutions of \eqref{eqex} are oscillatory, and by 
Theorem \ref{teo1} the same holds for \eqref{eqex1}.
\end{ex}

\begin{ex}
Consider the equation
\begin{equation}\label{equa}
\dot{x}(t)= -\displaystyle\frac{1}{t^2} \displaystyle\chi_{ [3, \infty) \setminus \mathbb{Q}}(t) x(t-1), \qquad t \in [3, \infty),
\end{equation}
where $p(t)=\displaystyle\frac{1}{t^2} \displaystyle\chi_{ [3, \infty) \setminus \mathbb{Q}}(t)$.
Since $p$ is Lebesgue integrable over $[3, \infty)$ with finite integral, the Perron integral $\int_3^{\infty} p(s)ds$ exists.  Moreover, 
$$\displaystyle\int_{t-1}^{t} p(s)ds =  \displaystyle\frac{1}{t(t-1)} < \displaystyle\frac{1}{e}, \, \, \, \, \,   \, \, \, \,  t \geq 3.$$
Thus equation \eqref{equa} satisfies the hyphoteses of Theorem \ref{teo2} and, hence, it has a nonoscillatory solution. 
	
	
\end{ex}

\section*{Acknowledgements} 

The first author was supported by FAPESP (grant 2015/12489-0) and CAPES (grant PROEX 9422567-D); the second author was supported by CNPq (grant 309344/2017-4) and FAPESP (grant 2017/13795-2) and the third author by FAPESP (grant 2018/15183-7).


\noindent 	M. Ap. Silva  and M. Federson \\
Departamento de Matem\'atica, ICMC\\
Universidade de São Paulo - S\~ao Carlos, Caixa Postal 668,\\
13560-970 S\~ao Carlos SP, Brazil\\ Emails: marielle@usp.br; federson@icmc.usp.br 

\vspace{2mm}

\noindent M. C. Gadotti \\
Departamento  de  Matemática, IGCE  \\ 
 Universidade  Estadual Paulista, \\
Avenida 24A 1515, 13506-700 Rio Claro SP, Brazil \\
Email: mc.gadotti@unesp.br

\end{document}